\newtheorem{thm}{Theorem}
\newtheorem{lem}{Lemma}
\newtheorem{claim}{Claim}
\newtheorem{defn}{Definition}
\newcommand{\Z}{{\Z B}}
\let\oldenumerate\enumerate
\renewcommand{\enumerate}{
  \oldenumerate
  \setlength{\itemsep}{0pt}
  \setlength{\parskip}{0pt}
  \setlength{\parsep}{0pt}
}
\def\vertex(#1){\put(#1){\circle*{2}}}
\def\vertexo(#1){\put(#1){\circle{2}}}
\def\vert(#1){\put(#1){\circle*{1.5}}}
\def\verto(#1){\put(#1){\circle{1.5}}}
\def\lab(#1)#2{\put(#1){\makebox(0,0)[c]{#2}}}
\begin{document}

\title{ The $1$-nearly vertex independence number of a graph}

\author{Zekhaya B. Shozi \thanks{Research supported by University of KwaZulu-Natal.}\\
	School of Mathematics, Statistics \& Computer Science\\
	University of KwaZulu-Natal\\
	Durban, 4000 South Africa\\
\small \tt Email: zekhaya@aims.ac.za
}

\date{}
\maketitle

\begin{abstract}
 Let $G$  be a graph with vertex set $V(G)$ and edge set $E(G)$. A set $I_0(G) \subseteq V(G)$ is a vertex independent set if no two vertices in $I_0(G)$ are adjacent in $G$. We study $\alpha_1(G)$, which is the maximum cardinality of a set $I_1(G) \subseteq V(G)$ that contains exactly one pair of adjacent vertices of $G$. We call $I_1(G)$ a $1$-nearly vertex independent set of $G$ and $\alpha_1(G)$ a $1$-nearly vertex independence number of $G$. We provide some cases of explicit formulas for $\alpha_1$. Furthermore, we prove a tight lower (resp. upper) bound on $\alpha_1$ for graphs of order $n$. The extremal graphs that achieve equality on each bound are fully characterised.
 \end{abstract}

{\small \textbf{Keywords:} $1$-Nearly vertex independent set; $1$-Nearly vertex independence number; Good graph} \\
\indent {\small \textbf{AMS subject classification:} 05C69}
\newpage
	
\section{Introduction}

A simple and undirected graph $G$ is an ordered pair of sets $(V(G), E(G))$, where $V(G)$ is a nonempty set of elements called \emph{vertices} and $E(G)$ is a (possibly empty) set of $2$-element subsets of $V(G)$ called \emph{edges}. For convenience, we often write $uv$ instead of $\{u,v\}$ to represent the edge joining the vertices $u$ and $v$ in a graph $G$.  The number of vertices of a graph $G$ is the \emph{order} of $G$ and is denoted by $n$, while the number of edges of $G$ is a the \emph{size} of $G$ and is denoted by $m$.

A \emph{vertex independent set} of a graph $G$ is a set $I_0(G) \subseteq V(G)$ such that all elements of $I_0(G)$ are pairwise nonadjacent in $G$. The \emph{vertex independence number} of a graph $G$ is the maximum cardinality of a vertex independent set in $G$, and is denoted by $\alpha_0(G)$. The literature shows that several researchers have studied the vertex independence number of a graph, and the study itself dates back to the $1970$s. See the article by Cairo and Yair \cite{caro1979new}. Also, see the article by Lov{\'a}sz \cite{lovasz1982bounding}, where the bounds on the vertex independence number of a graph are studied. Several other researchers   studied the vertex independence number of a graph while imposing various graph restrictions such as considering only connected graphs \cite{rad2018new}, fixing the order and size of a graph \cite{harant2001independence}, and fixing the maximum degree of a graph \cite{jones1984independence, harant2008independence, kanj2013independence}, among others. The authors in \cite{frieze1990independence} studied the vertex independence number of a random graph.

A generalisation of the vertex independence number of a graph has been proposed and given the name of the $k$-independence number. The \emph{$k$-independence number} of a graph $G$ is defined as the maximum cardinality of a set all whose vertices are at distance at least $k+1$ in $G$. For further information on this topic, please see \cite{favoron1987k, bouchou2014k, abiad2019k}. In this paper we propose a new other generalisation of the vertex independence number of a graph. A graph $H$ is a \emph{subgraph} of a graph $G$ if $V(H)\subseteq V(G)$  and $E(H)\subseteq E(G)$. If furthermore 
$$
E(H)=E(G)\cap \{\{u,v\} \mid u,v\in V(G)\},
$$
then we say that $H$ is an \emph{induced subgraph} of $G$. This means that for a given set of vertices $V(H)$, it has all possible edges of $G$ that it can have. For an integer $k\ge 0$, we define a \emph{$k$-nearly vertex independent set} of a graph $G$ to be a set $I_k \subseteq V(G)$ such that the subgraph induced by $I_k$ in $G$ has size exactly $k$. We remark that a $0$-nearly vertex independent set is a vertex independent set. The \emph{$k$-nearly vertex independence number} of a graph $G$ is the maximum cardinality of a $k$-nearly vertex independent set in $G$, and is denoted by $\alpha_k(G)$. 

This paper particularly focuses on $\alpha_1$. At least for the classes of graphs that we investigated, the behavior of $\alpha_1$ is sometimes similar and sometimes different to that of $\alpha_0$. Among all graphs of order $n$, while the edgeless  graph $\overline{K_n}$ has the largest $\alpha_0$, it has the smallest $\alpha_1$ as $\alpha_1(\overline{K_n})=0$. Among all connected graphs of order $n \ge 2$, while the star $K_{1,n-1}$ has the largest $\alpha_0$, it has the smallest $\alpha_1$ as $\alpha_1(K_{1,n-1})=2$. The complete graph $K_n$, that has all possible edges, which has the smallest $\alpha_0$ also has the smallest $\alpha_1$ if $n\geq 2$.

The rest of the paper is structured as follows. We start with a preliminary section, Section \ref{preliminary}. Specifically, in Section \ref{recursive-formula} we provide a  recursive formula that we will use frequently throughout this paper, and in Section \ref{explicit-formulas} we provide explicit formulas for $\alpha_1$ of some classes of graphs. The main results are in Section \ref{Sec:Main}. There, we provide a characterisation of the graph that achieve the minimum (and maximum) $\alpha_1$ among all graphs of order $n$. The graph that achieves the minimum $\alpha_1$ among all graphs of order $n$ is the edgeless graph $\overline{K_n}$, while the graph that achieves the maximum $\alpha_1$ among all graphs of order $n$ is the graph with exactly one edge. We also provide a full characterisation of the family of the connected graphs of order $n$ that achieve the minimum $\alpha_1$. Lastly, we provide a characterisation of the connected graphs of order $n$ that achieve the maximum $\alpha_1$. There are two such connected graphs of order $n$; one is a unicyclic graph $U_{1,n-1}$ and the other one is a broom graph $B^3_n$.

\section{Preliminary}
\label{preliminary}

  For graph theory notation and terminology, we generally follow~\cite{henning2013total}. Let $G$ be a graph with vertex set $V(G)$, edge set $E(G)$, order $n = |V(G)|$ and size $m = |E(G)|$. We denote the degree of a vertex $v$ in $G$ by $\deg_Gv$. The complement of $G$ is denoted by $\overline{G}$ and defined as 
 $$\overline{G}=(V(G),\{uv \mid u,v\in V(G), u\neq v \text{ and }uv\notin E(G)\}).$$
 We define the \emph{open neighbourhood} of a vertex $v$ of a graph $G$ to be the set
 $$N_G(v) = \{ u \in V(G) \mid uv \in E(G) \},$$
 while its \emph{closed neighbourhood} is $N_G[v] = N_G(v) \cup \{v\}$.
 
 For a subset $S$ of vertices of a graph $G$, we denote by $G - S$ the graph obtained from $G$ by deleting the vertices in $S$ and all edges incident to them. If $S = \{v\}$, then we simply write $G - v$ rather than $G - \{v\}$. For any graphs $G$ and $H$, we define the \emph{join}
 $$
 G+H=(V(G)\cup V(H), E(G)\cup E(H)\cup\{uv \mid u\in V(G)\text{ and } v\in V(H) \})
 $$
 obtained by adding all possible edges between $G$ and $H$. 
 The subgraph of $G$ induced by the set $S$ is denoted by $\langle S \rangle_G$.
 
  We denote the path graph, cycle graph, wheel graph and complete graph on $n$ vertices by $P_n$, $C_n$, $W_n$ and $K_n$, respectively. A cycle graph $C_3$ is also called a \emph{triangle} in the literature. For positive integers $r$ and $s$, we denote by $K_{r,s}$ the complete bipartite graph with partite sets $X$ and $Y$ such that $|X|=r$ and $|Y|=s$. A complete bipartite graph $K_{1,n-1}$ is also called a \emph{star} in the literature.  Let $k\ge 2$ be an integer. A \emph{broom graph}, denoted $B_n^k$, is the tree of order $n$ obtained from the path, $P_k$, of order $k$ by adding $n-k$ new vertices and then joining them to exactly one end-vertex of $P_k$. Let $U_{1,n-1}$ be the graph obtained from the star $K_{1,n-1}$ by adding one more edge to make it a
  unicyclic graph.
  
  We use the standard notation $[k] = \{1,\ldots,k\}$.

\subsection{Recursive formula}
\label{recursive-formula}

For graph $G = (V(G), E(G))$, there exists an edge $e=uv$ of $G$ such that $\{u,v\}$ belongs to a maximum $1$-nearly vertex independent set of $G$. Thus, we have
\begin{align*}
    \alpha_1(G) = 2 + \alpha_0(G - N_G[u]\cup N_G[v]),
\end{align*}
where $2$ is the cardinality of a $1$-nearly vertex independent set, $\{u,v\}$, of $G$ formed by the two endpoints $u$ and $v$ of $e \in E(G)$ and $\alpha_0(G - N_G[u]\cup N_G[v])$ is the cardinality of a maximum $0$-nearly vertex independent set of the graph obtained from $G$ by deleting the vertices $u$ and $v$ along with all their neighbours. 

\subsection{Explicit formulas for $\alpha_1$ of some graphs}
\label{explicit-formulas}

In this section we present the explicit formulas for $\alpha_1$ of some famous classes of graphs such that the complete graph $K_n$, the path graph $P_n$, the cycle graph $C_n$ and the wheel graph $W_n$, of order $n$. 

Let $K_n$ be the complete graph of order $n$. Since every vertex of $K_n$ is adjacent to every other vertex of $K_n$, a maximum $1$-nearly vertex independent set of $K_n$ is $\{u,v\}$, where $u$ and $v$ are the endpoints of some edge $e\in E(K_n)$. Thus, $\alpha_1(G) = |\{u,v\}|=2$. 

We state, without proof, the following well-known result for the $0$-nearly vertex independence number of a path with $n$ vertices.

\begin{thm}[\rm \cite{rad2016note}]
    \label{thm:alpha-0-of-a-path}
    For a path $P_n$ of order $n\ge 1$, we have 
    \begin{align*}
        \alpha_0(P_n) = \left \lfloor \frac{n+1}{2} \right \rfloor.
    \end{align*}
\end{thm}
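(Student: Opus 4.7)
The plan is to label the path as $P_n = v_1 v_2 \cdots v_n$ with $v_i v_{i+1} \in E(P_n)$ for $1 \le i \le n-1$, and then establish matching lower and upper bounds on $\alpha_0(P_n)$.

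For the lower bound, I would exhibit an explicit independent set. Take $S = \{v_i : 1 \le i \le n,\ i \text{ odd}\}$. Any two vertices of $S$ have indices differing by at least $2$, so $S$ is independent in $P_n$. Counting the odd integers in $\{1,\ldots,n\}$ gives $|S| = \lceil n/2 \rceil = \lfloor (n+1)/2 \rfloor$, and hence $\alpha_0(P_n) \ge \lfloor (n+1)/2 \rfloor$.

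For the upper bound, I would partition $V(P_n)$ into consecutive blocks. When $n = 2t$ is even, use the $t$ edge-blocks $\{v_{2i-1}, v_{2i}\}$ for $1 \le i \le t$; when $n = 2t+1$ is odd, use the same $t$ edge-blocks together with the singleton $\{v_n\}$. Each two-element block is an edge of $P_n$, so it meets any independent set in at most one vertex; the singleton, when present, contributes at most one vertex. Summing gives $\alpha_0(P_n) \le t$ in the even case and $\alpha_0(P_n) \le t+1$ in the odd case, i.e.\ $\alpha_0(P_n) \le \lfloor (n+1)/2 \rfloor$ in both cases, matching the lower bound.

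The argument is short and essentially routine; the only real obstacle is keeping the parity of $n$ straight, which is why expressing the bound as the single floor function $\lfloor (n+1)/2 \rfloor$ is convenient. An equally quick alternative would be induction on $n$ using the recurrence $\alpha_0(P_n) = \max\bigl(\alpha_0(P_{n-1}),\, 1 + \alpha_0(P_{n-2})\bigr)$, obtained by conditioning on whether $v_n$ belongs to a maximum independent set, with base cases $n = 1, 2$ checked directly.
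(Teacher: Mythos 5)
The paper states this result without proof, citing it as well known from the literature, so there is no in-paper argument to compare against. Your proof is correct and complete: the odd-indexed vertices give the lower bound $\lceil n/2\rceil=\lfloor (n+1)/2\rfloor$, and the partition of $V(P_n)$ into $\lfloor n/2\rfloor$ edges plus (for odd $n$) one singleton gives the matching upper bound, since an independent set meets each edge in at most one vertex. The inductive alternative via $\alpha_0(P_n)=\max\bigl(\alpha_0(P_{n-1}),\,1+\alpha_0(P_{n-2})\bigr)$ is equally valid; either version would serve as a self-contained proof here.
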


For an integer $n\ge 2$, let $P_n=u_1, u_2, \ldots, u_n$ be a path of order $n$. Then, we can find a maximum $1$-nearly vertex independent set of $P_n$ that consists of the two vertices $u_1$ and $u_2$ as well as the $\alpha_0(P_{n-3})$ vertices that form a maximum $0$-nearly vertex independent set of $P_{n-3}$. Thus, we have
\begin{align*}
    \alpha_1(G) = 2 + \alpha_0(P_{n-3}) = 2 + \left \lfloor \frac{n-3 + 1}{2} \right \rfloor = \left \lfloor 2 + \frac{n-2}{2} \right \rfloor = \left \lfloor \frac{n+2}{2} \right \rfloor.
\end{align*}

For an integer $n\ge 3$, let $C_n= u_1, u_2, \ldots, u_n, u_1$ be a cycle of order $n$. Then, we can find a maximum $1$-nearly vertex independent set of $C_n$ that consists of the two vertices $u_1$ and $u_2$ as well as the $\alpha_0(P_{n-4})$ vertices that form a maximum $0$-nearly vertex independent set of $P_{n-4}$. Thus, we have
\begin{align*}
    \alpha_1(G) = 2 + \alpha_0(P_{n-4}) = 2 + \left \lfloor \frac{n-4 + 1}{2} \right \rfloor = \left \lfloor 2 + \frac{n-3}{2} \right \rfloor = \left \lfloor \frac{n+1}{2} \right \rfloor.
\end{align*}

For an integer $n\ge 4$, let $W_n = C_n + K_1$, where $C_n= u_1, u_2, \ldots, u_{n-1}, u_1$ is a cycle of order $n-1$. Then, we can find a maximum $1$-nearly vertex independent set of $W_n$ that consists of the two vertices $u_1$ and $u_2$ as well as the $\alpha_0(P_{n-5})$ vertices that form a maximum $0$-nearly vertex independent set of $P_{n-5}$. Thus, we have
\begin{align*}
    \alpha_1(G) = 2 + \alpha_0(P_{n-5}) = 2 + \left \lfloor \frac{n-5 + 1}{2} \right \rfloor = \left \lfloor 2 + \frac{n-4}{2} \right \rfloor = \left \lfloor \frac{n}{2} \right \rfloor.
\end{align*}

\section{Main result}
\label{Sec:Main}

In this section we present some bounds on the $1$-nearly vertex independence number of a graph $G$. In particular, we present a tight lower (and upper) bound on the  $1$-nearly vertex independence number of a general graph $G$. Thereafter, we present a tight lower (and upper) bound on the  $1$-nearly vertex independence number of a connected graph $G$. The graphs that achieve equality on each of these bounds are characterised. 

\subsection{Tight lower (and upper) bound for general graphs}

We begin by proving the following important lemmas.

\begin{lem}
    \label{lem:if-alpha-1-min-then-size-0}
    Let $G$ be a graph of order $n$. If $G$ has minimum $\alpha_1$, then $G$ has size $m=0$.
\end{lem}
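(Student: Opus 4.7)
The plan is to argue by a direct comparison with the edgeless graph. The idea is simple: every graph with at least one edge already has $\alpha_1 \ge 2$, so any minimiser must be edgeless, i.e.\ have size $m=0$.

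First I would observe that $\alpha_1(\overline{K_n})=0$: since $\overline{K_n}$ contains no edge, no subset of $V(\overline{K_n})$ induces a subgraph of size exactly $1$, so no $1$-nearly vertex independent set exists, and the convention noted in the introduction gives $\alpha_1(\overline{K_n})=0$. This already provides a candidate value for the minimum over all graphs of order $n$, namely $0$.

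Next I would show the matching lower bound for graphs with at least one edge: if $G$ has size $m\ge 1$, pick any edge $e=uv\in E(G)$ and consider the set $S=\{u,v\}$. The induced subgraph $\langle S\rangle_G$ has exactly one edge (the edge $uv$), so $S$ is a $1$-nearly vertex independent set of $G$, which yields
\begin{equation*}
\alpha_1(G)\ \ge\ |S|\ =\ 2.
\end{equation*}

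Combining the two observations, among all graphs of order $n$ we have $\alpha_1(G)\ge 2$ whenever $m\ge 1$, while $\alpha_1(\overline{K_n})=0$. Hence if $G$ attains the minimum value of $\alpha_1$ over graphs of order $n$, then $\alpha_1(G)\le \alpha_1(\overline{K_n})=0$, which forces $m=0$, as required.

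There is no real obstacle here — the argument is essentially a one-line comparison. The only point that deserves care is the boundary convention $\alpha_1(\overline{K_n})=0$ (already fixed in the introduction) and the trivial verification that the two endpoints of any edge form a valid $1$-nearly vertex independent set.
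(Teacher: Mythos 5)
Your proposal is correct and follows essentially the same argument as the paper: both rest on the observations that $\alpha_1(\overline{K_n})=0$ and that any graph with an edge has $\alpha_1 \ge 2$, the only difference being that the paper phrases the comparison as a proof by contradiction with $G' = G - E(G)$.
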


\begin{proof}
    Let $G$ be a graph of order $n$. Suppose, to the contrary, that $\alpha_1(G)$ is minimum and $G$ has size $m \ge 1$. We will show that there exists a graph $G'$ with $n$ vertices such that $\alpha_1(G') < \alpha_1(G)$. Let $E(G) = \{ e_i \mid 1 \le i \le m \}$, and consider the graph $G' = G - E(G)$. Note that $G'$ is the edgeless graph of order $n$ with  $1$-nearly vertex independence number
    \begin{align*}
        \alpha_1(G') = 0 < 2 \le \alpha_1(G).
    \end{align*}
    However, this contradicts the fact that $G$ has minimum $\alpha_1$.
\end{proof}

\begin{lem}
    \label{lem:if-alpha-1-max-then-size-1}
    Let $G$ be a graph of order $n$. If $G$ has maximum $\alpha_1$, then $G$ has size $m=1$.
\end{lem}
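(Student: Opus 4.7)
The plan mirrors the proof of Lemma~\ref{lem:if-alpha-1-min-then-size-0}, using a graph of size~$1$ as the comparison benchmark rather than the edgeless graph. I would argue by contradiction: suppose $G$ attains maximum $\alpha_1$ among graphs of order $n$ yet $m\ne 1$, so that either $m=0$ or $m\ge 2$, and then construct an explicit graph $G'$ on the same vertex set with $\alpha_1(G')>\alpha_1(G)$.

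First I would take $G'$ to be any graph on $V(G)$ with exactly one edge, say $u'v'$. Since the entire vertex set $V(G')$ induces $G'$ itself, which has exactly one edge, $V(G')$ is a $1$-nearly vertex independent set of $G'$, and hence $\alpha_1(G')=n$.

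Then I would split on the value of $m$. If $m=0$, then $G$ has no edges at all, so no subset of $V(G)$ can induce exactly one edge, forcing $\alpha_1(G)=0<n=\alpha_1(G')$. If instead $m\ge 2$, let $S\subseteq V(G)$ be a maximum $1$-nearly vertex independent set of $G$; then $\langle S\rangle_G$ has exactly one edge, whereas $\langle V(G)\rangle_G=G$ has $m\ge 2$ edges. Consequently $S\ne V(G)$, so $\alpha_1(G)=|S|\le n-1<n=\alpha_1(G')$. Either way this contradicts the maximality of $\alpha_1(G)$.

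I do not anticipate any real obstacle here. The one-edge graph is a transparent benchmark because every vertex outside its unique edge is isolated, so its whole vertex set is automatically $1$-nearly vertex independent. The only subtlety worth isolating is the counting step that forces $|S|\le n-1$ whenever $m\ge 2$, which is precisely what prevents a graph with more than one edge from matching the benchmark from above.
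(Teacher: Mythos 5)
Your proof is correct and follows essentially the same strategy as the paper: compare $G$ against a one-edge graph on $n$ vertices, which attains $\alpha_1 = n$, and show that $m=0$ or $m\ge 2$ forces $\alpha_1(G)<n$. If anything, your argument is slightly tighter, since you explicitly justify $\alpha_1(G)\le n-1$ when $m\ge 2$ (a maximum $1$-nearly vertex independent set cannot be all of $V(G)$), a step the paper's proof leaves implicit when it asserts $n>\alpha_1(G)$.
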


\begin{proof}
    Let $G$ be a graph of order $n$. Suppose, to the contrary, that $\alpha_1(G)$ is maximum and $G$ has size $m \ne 2$. If $m=0$, then $\alpha_1(G) =0$ and therefore $\alpha_1(G)$ is not maximum. Hence, we may assume that $m\ge 2$. We will show that there exists a graph $G'$ with $n$ vertices such that $\alpha_1(G') > \alpha_1(G)$. Let $E(G) = \{ e_i \mid 1 \le i \le m \}$, and let $I_1(G)$ be a maximum $1$-nearly vertex independent set in $G$ such that $\{u,v\} \subseteq I_1(G)$ and $e_1 = uv$. Now consider the graph $G' = G - \{e_i \mid 2 \le i \le m\}$. Note that $G'$ has order $n$. Thus, a maximum $1$-nearly vertex independent set in $G'$ is given by $I_1(G') = I_1(G) \cup \{ w \in V(G) \mid w \notin I_1(G) \}$, implying that
    \begin{align*}
        \alpha_1(G') = |I_1(G')| &= |I_1(G)| + |\{ w \in V(G) \mid w \notin I_1(G) \}|\\
        &=\alpha_1(G) + n - \alpha_1(G)\\
        &=n\\
        &>\alpha_1(G).
    \end{align*}
    However, this contradicts the fact that $G$ has maximum $\alpha_1$.
\end{proof}

\begin{thm}
    \label{thm:bounds-general-graphs}
    If $G$ is a graph of order $n$, then 
    \begin{align*}
        0 \le \alpha_1(G) \le n,
    \end{align*}
    where the lower bound is achieved if and only if $G \cong \overline{K_n}$ and the upper bound is achieved if and only if $G \cong K_2 \cup (n-2)K_1$.
\end{thm}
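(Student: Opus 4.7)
The plan is to establish the two inequalities directly from the definition and then handle the equality cases by appealing to Lemmas 1 and 2, with short direct arguments available as alternatives. The bound $\alpha_1(G)\ge 0$ is immediate because $\alpha_1(G)$ is a cardinality, and the bound $\alpha_1(G)\le n$ is immediate because every $1$-nearly vertex independent set is a subset of $V(G)$, which has exactly $n$ elements.

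For the lower bound equality, I would argue both directions. If $G\cong \overline{K_n}$, then $E(G)=\emptyset$, so no subset of $V(G)$ induces a subgraph of size exactly one, and hence $\alpha_1(G)=0$. Conversely, if $\alpha_1(G)=0$, Lemma 1 forces $m=0$, so $G\cong \overline{K_n}$. One can also argue this converse directly: any edge $uv\in E(G)$ would yield $\{u,v\}$ as a $1$-nearly vertex independent set of cardinality $2$, contradicting $\alpha_1(G)=0$.

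For the upper bound equality, if $G\cong K_2\cup (n-2)K_1$, then $\langle V(G)\rangle_G = G$ has exactly one edge, so $V(G)$ itself is a $1$-nearly vertex independent set and $\alpha_1(G)=n$. Conversely, if $\alpha_1(G)=n$, then the witnessing set of size $n$ must be $V(G)$ itself, and $\langle V(G)\rangle_G = G$ must have exactly one edge by the definition of a $1$-nearly vertex independent set; hence $G$ has size $m=1$ and therefore $G\cong K_2\cup (n-2)K_1$. The same conclusion is delivered by Lemma 2.

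I do not expect a significant obstacle. Each direction of each characterisation is either an immediate consequence of the relevant lemma or follows at once from the definition, once one observes that realising $\alpha_1(G)\in\{0,n\}$ forces the extremal witnessing set to be either empty or all of $V(G)$, which in turn pins down the structure of $G$ up to isomorphism.
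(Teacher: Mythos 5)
Your proposal is correct and follows essentially the same route as the paper: the two bounds are observed to be trivial consequences of the definition, and the equality characterisations reduce to the observation that $\alpha_1(G)=0$ forces $m=0$ and $\alpha_1(G)=n$ forces $m=1$, exactly as in Lemmas \ref{lem:if-alpha-1-min-then-size-0} and \ref{lem:if-alpha-1-max-then-size-1}. If anything, your direct arguments (an edge yields a $1$-nearly vertex independent set of size $2$; a witnessing set of size $n$ must be $V(G)$ and hence $G$ itself has exactly one edge) are more self-contained than the paper's appeal to the proofs of those lemmas.
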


\begin{proof}
    Let $G$ be a graph of order $n$. The lower bound $\alpha_1(G) \ge 0$ follows from the fact that if $G \cong \overline{K_n}$, then $\alpha_1(G) = 0$. The proof of a characterisation of the graph that achieves the lower bound follows immediately from the proof of Lemma \ref{lem:if-alpha-1-min-then-size-0}. Similarly, the upper bound $\alpha_1(G) \le n$ follows from the fact that if $G \cong K_2 \cup (n-2)K_1$, then $\alpha_1(G) = n$. The proof of a characterisation of the graph that achieves the upper bound follows immediately from the proof of Lemma \ref{lem:if-alpha-1-max-then-size-1}. 
\end{proof}

\subsection{Tight lower  bound for connected graphs}

In this subsection we present a tight lower bound on the $1$-nearly vertex independence number of a connected graph of order $n$. Furthermore, we characterise the family of the connected graphs of order $n$ that achieve the minimum $1$-nearly vertex independence number. 

\begin{defn}
\label{Def:Good}
    Let $G = (V(G), E(G))$ be a graph with vertex set $V(G)$ and edge set $E(G)$. If $e=uv \in E(G)$, then $e$ is a good edge if $N_G[u]\cup N_G[v] = V(G)$. The graph $G$ is a good graph if for every edge $e \in E(G)$, $e$ is a good edge. Let
    $$\mathcal{H} = \{ G \mid G \text{ is a good graph} \}.$$
\end{defn}
It follows from the definition that a good graph has to be connected.

\begin{thm}
    If $G$ is a connected graph of order $n\ge 2$, then
    \begin{align}
        \label{if-G-is-a-connected-graph-of-order-n-size-m-then-alpha1G-atleast-2}
        \alpha_1(G) \ge 2,
    \end{align}
    with equality if and only if $G \in \mathcal{H}$.
\end{thm}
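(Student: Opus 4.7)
The lower bound $\alpha_1(G) \ge 2$ is immediate: since $G$ is connected with $n \ge 2$ vertices, it contains at least one edge $e = uv$, and the two-vertex set $\{u,v\}$ induces exactly the edge $e$, so it is a $1$-nearly vertex independent set of cardinality $2$. The substantive work is in the characterization of equality.

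The strategy is to sharpen the recursive identity of Section~\ref{recursive-formula} into a \emph{per-edge} inequality. For every edge $e = uv \in E(G)$ and every independent set $I$ in $G - (N_G[u] \cup N_G[v])$, the set $\{u,v\} \cup I$ induces exactly the edge $uv$ and is therefore $1$-nearly vertex independent, so
\begin{align*}
    \alpha_1(G) \;\ge\; 2 + \alpha_0\bigl(G - (N_G[u] \cup N_G[v])\bigr)
\end{align*}
for every edge $uv$ of $G$. Conversely, every $1$-nearly vertex independent set arises in this form: the unique induced edge supplies a pair $u,v$, and the remaining vertices form an independent set disjoint from $N_G[u] \cup N_G[v]$.

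The forward direction then follows by contrapositive. If $G \notin \mathcal{H}$, then by Definition~\ref{Def:Good} some edge $uv$ is not good, so there exists a vertex $w \in V(G) \setminus (N_G[u] \cup N_G[v])$; the triple $\{u,v,w\}$ induces only the edge $uv$ and hence is $1$-nearly vertex independent, giving $\alpha_1(G) \ge 3$. For the reverse direction, suppose $G \in \mathcal{H}$ and let $S$ be any $1$-nearly vertex independent set; write $uv$ for the unique edge of $\langle S\rangle_G$, so $u,v \in S$. Any other $w \in S$ must be non-adjacent to both $u$ and $v$ (otherwise $\langle S\rangle_G$ would contain a second edge), but goodness of $uv$ forces $w \in N_G[u] \cup N_G[v]$ and hence $w$ is adjacent to $u$ or $v$; so no such $w$ exists and $|S| = 2$. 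I do not foresee a real obstacle: the only subtle point is that the identity in Section~\ref{recursive-formula} is stated for a single optimally chosen edge, whereas both halves of the characterization need the per-edge inequality displayed above, which is however immediate from the construction $\{u,v\} \cup I$.
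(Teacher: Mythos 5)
Your proof is correct and follows essentially the same route as the paper: the lower bound via a single edge of the connected graph, and the forward direction of the characterization by contrapositive (a non-good edge $uv$ admits a vertex outside $N_G[u]\cup N_G[v]$, yielding a $1$-nearly vertex independent set of size $3$). You are in fact slightly more complete than the paper, which leaves the reverse implication (that $G\in\mathcal{H}$ forces $\alpha_1(G)=2$) implicit; your observation that any third vertex of a $1$-nearly vertex independent set must lie outside $N_G[u]\cup N_G[v]$, contradicting goodness of $uv$, cleanly supplies that half.
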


\begin{proof}
Let $G$ be a connected graph of order $n\ge 2$. Since $G$ is connected, there exists at least one edge $uv$ in $G$. Note that $I_1(G) = \{u,v\}$ forms a $1$-nearly vertex independent set of $G$. Thus, 
\begin{align*}
    \alpha_1(G) \ge |I_1(G)| = 2,
\end{align*}
and thereby proving Inequality (\ref{if-G-is-a-connected-graph-of-order-n-size-m-then-alpha1G-atleast-2}).

 If $G$ is an $(n,m)$-graph that is not good, then there is an edge $uv$ of $G$ that is not a good edge. That is, $V(G)\setminus(N_G[u]\cup N_G[v])\neq \emptyset$. Thus, if $\{u,v\}$ is a $1$-nearly vertex independent set of $G$, then the set $\{u,v,z\}$ is also a $1$-nearly vertex independent set of $G$ for any $z\in V(G)\setminus (N_G[u]\cup N_G[v])$. Hence, we have
 \begin{align*}
     \alpha_1(G) \ge |\{u,v,z\}| = 3 > |\{u,v\}| = 2,
 \end{align*}
 completing the proof.
\end{proof}

Definition \ref{Def:Good} already gives a full characterisation of the family $\mathcal{H}$. However, using that description, it is still hard to imagine how the structure of an element of $\mathcal{H}$ with a large number of vertices should look like. The rest of this subsection is a further investigation of $\mathcal{H}$ aiming to understand the structures of its elements.

The proofs of Lemma \ref{Lem:ClosedH}, Lemma \ref{Lem:ExtendH} and Theorem \ref{thm:a-family-of-minimal-graphs} stated below can be found in \cite{andriantiana2023number}, but are also presented in this paper for completeness.

\begin{lem}
\label{Lem:ClosedH}
    The family $\mathcal{H}$ is closed under the join operation.
\end{lem}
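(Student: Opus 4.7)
The plan is to verify the good-edge condition for every edge of $G+H$ by a short case analysis based on the side(s) the endpoints lie in. First I would fix two good graphs $G$ and $H$, let $F = G+H$, and observe that $F$ is connected (indeed, the join of any two nonempty graphs is connected), so it remains only to check that every edge of $F$ is a good edge in the sense of Definition \ref{Def:Good}.

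Let $e = uv \in E(F)$. By the definition of the join, there are three cases: (i) $u,v \in V(G)$; (ii) $u,v \in V(H)$; (iii) one of $u,v$ lies in $V(G)$ and the other in $V(H)$. In case (i), the edge $uv$ lies in $E(G)$, so since $G$ is good we have $N_G[u]\cup N_G[v] = V(G)$. In $F$, both $u$ and $v$ are adjacent to every vertex of $H$, so $V(H) \subseteq N_F[u]\cup N_F[v]$, and combined with $V(G) = N_G[u] \cup N_G[v] \subseteq N_F[u]\cup N_F[v]$, this yields $N_F[u]\cup N_F[v] = V(F)$. Case (ii) is identical after swapping the roles of $G$ and $H$. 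In case (iii), say $u \in V(G)$ and $v \in V(H)$; then $u$ is adjacent in $F$ to every vertex of $H$, so $V(H) \subseteq N_F[u]$, and symmetrically $V(G) \subseteq N_F[v]$, so already $N_F[u]\cup N_F[v] \supseteq V(G)\cup V(H) = V(F)$.

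In each case we obtain $N_F[u]\cup N_F[v] = V(F)$, so $uv$ is a good edge of $F$. Since $uv$ was arbitrary, $F = G+H \in \mathcal{H}$, completing the argument. There is no real obstacle here: the lemma is essentially built into the join operation, since every vertex on one side is automatically adjacent to every vertex on the other. The only point that might warrant care is ensuring that the cross edges (case (iii)) are not forgotten, but for these the good-edge condition is trivially satisfied without any use of the hypothesis that $G$ and $H$ are good; the hypothesis is only needed to cover the edges internal to $G$ or $H$.
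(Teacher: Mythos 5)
Your proof is correct and follows essentially the same route as the paper's: a case analysis on whether the edge lies inside one of the two factors or is a cross edge of the join, using the goodness hypothesis only for the internal edges and the automatic cross-adjacency of the join for the rest. Nothing is missing.
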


\begin{proof}
    Let $G_1 = (V(G_1), E(G_1))$ and $G_2 = (V(G_2), E(G_2))$ be two graphs in $\mathcal{H}$. Then $G_1$ and $G_2$ are good graphs.  We want to show that $G_1+G_2\in \mathcal{H}$. Since $G_1$ and $G_2$ are good graphs, we have $N_{G_1}[s] \cup N_{G_1}[t] = V(G_1)$ for all $st\in E(G_1)$, and $N_{G_2}[u] \cup N_{G_2}[v] = V(G_2)$ for all $uv\in E(G_2)$. Let $wx \in E(G_1+G_2)$. 
    Then either $wx \in E(G_1)$ or $wx \in E(G_2)$ or $w \in V(G_1)$ and $x\in V(G_2)$. If $wx \in E(G_1)$, then, since $G_1$ is a good graph, $wx$ is a good edge in $G_1$. By the definition of the join operation, $N_{G_1+G_2}[w] = N_{G_1}[w] \cup V(G_2)$ and $N_{G_1+G_2}[x] = N_{G_1}[x] \cup V(G_2)$. Thus, $$N_{G_1+G_2}[w] \cup N_{G_1+G_2}[x] = N_{G_1}[w] \cup N_{G_1}[x] \cup V(G_2) = V(G_1)\cup V(G_2) = V(G_1+G_2).$$ By the same reasoning, if $wx \in E(G_2)$, then, since $G_2$ is a good graph, $wx$ is a good edge in $G_2$, and by the definition of the join operation, $wx$ is also a good edge in $G_1+G_2$.  Hence, we assume that $w \in V(G_1)$ and $x \in V(G_2)$. By the definition of the join operation, $N_{G_1+G_2}[w] = N_{G_1}[w] \cup V(G_2)$ and $N_{G_1+G_2}[x] = N_{G_2}[x] \cup V(G_1)$. Since $N_{G_1}[w] \subseteq V(G_1)$ and $N_{G_2}[x] \subseteq V(G_2)$, we have $$N_{G_1+G_2}[w] \cup N_{G_1+G_2}[x] = V(G_1)\cup V(G_2)=V(G_1+G_2),$$ implying that $wx$ is also a good edge in $G_1+G_2$. Since $wx$ was arbitrarily chosen, the graph $G_1+G_2$ is a good graph,  and thus $G_1 + G_2 \in \mathcal{H}$. 
\end{proof}

\begin{lem}
\label{Lem:ExtendH}
    If $G \in \mathcal{H}$, then for any integer $\ell \ge 1$, $G + \overline{K_\ell} \in \mathcal{H}$, where $\overline{K_\ell}$ is an edgeless graph of order $\ell$.
\end{lem}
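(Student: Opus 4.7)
The plan is to argue directly that every edge of $G + \overline{K_\ell}$ is a good edge, following the template of the proof of Lemma \ref{Lem:ClosedH}. The key simplification is that since $\overline{K_\ell}$ has no edges, the middle case in that earlier proof (an edge lying inside the second graph) does not arise, so only two cases need to be checked.

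First I would record the shape of closed neighborhoods after the join. For any $w \in V(G)$ we have
\[
N_{G+\overline{K_\ell}}[w] \;=\; N_G[w] \cup V(\overline{K_\ell}),
\]
and for any $x \in V(\overline{K_\ell})$, since $\overline{K_\ell}$ contributes no edges, we have
\[
N_{G+\overline{K_\ell}}[x] \;=\; \{x\} \cup V(G).
\]
These two identities are all we will need.

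Next I would pick an arbitrary edge $wx \in E(G + \overline{K_\ell})$ and split into two cases. In the first case $wx \in E(G)$; goodness of $G$ gives $N_G[w] \cup N_G[x] = V(G)$, and adjoining $V(\overline{K_\ell})$ to each closed neighborhood in the join yields $N_{G+\overline{K_\ell}}[w] \cup N_{G+\overline{K_\ell}}[x] = V(G) \cup V(\overline{K_\ell}) = V(G+\overline{K_\ell})$. In the remaining case $w \in V(G)$ and $x \in V(\overline{K_\ell})$; then $N_{G+\overline{K_\ell}}[x]$ already contains $V(G)$, while $N_{G+\overline{K_\ell}}[w]$ contains $V(\overline{K_\ell})$, so once more the union covers the entire vertex set. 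Since $wx$ was arbitrary, $G + \overline{K_\ell}$ is good, i.e.\ it belongs to $\mathcal{H}$.

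There is no real obstacle; the content is essentially a corollary of Lemma \ref{Lem:ClosedH} applied to the (vacuously good) graph $\overline{K_\ell}$. The only subtlety worth flagging is that $\overline{K_\ell}$ is disconnected for $\ell \ge 2$, so under the reading of $\mathcal{H}$ that requires connectedness one cannot literally invoke Lemma \ref{Lem:ClosedH}; the short direct argument above sidesteps this and is cleaner to present.
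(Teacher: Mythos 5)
Your proof is correct and follows essentially the same route as the paper's: compute the closed neighbourhoods in the join, then check the two cases (edge inside $G$, edge crossing between $G$ and $\overline{K_\ell}$) using the goodness of $G$ and the fact that $\overline{K_\ell}$ is edgeless. Your closing remark about why Lemma \ref{Lem:ClosedH} cannot be invoked directly (since $\overline{K_\ell}$ is not a good graph for $\ell \ge 2$) is a worthwhile observation that the paper leaves implicit.
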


\begin{proof}
    Let $G \in \mathcal{H}$. Then $G$ is a good graph, and so $N_G[u] \cup N_G[v] = V(G)$ for all $uv \in E(G)$. Let $\overline{K_\ell}$ be the empty graph of order $\ell$ with vertex set $V(\overline{K_\ell}) = \{w_1, w_2, \ldots, w_\ell\}$. By the join operation, $N_{G+\overline{K_\ell}} [w_i] = V(G)\cup \{w_i\}$ for all $i \in [\ell]$. Let $vx \in E(G+\overline{K_\ell})$. 
    Since $\overline{K_\ell}$ is an edgeless graph, $vx \notin E(\overline{K_\ell})$. If $vx \in E(G)$, then, since $G$ is a good graph, $vx$ is a good edge in $G$. By the definition of the join operation, $N_{G+\overline{K_\ell}}[v] = N_{G}[v] \cup V(\overline{K_\ell})$ and $N_{G+\overline{K_\ell}}[x] = N_{G}[x] \cup V(\overline{K_\ell})$. Thus, $$N_{G+\overline{K_\ell}}[v] \cup N_{G+\overline{K_\ell}}[x] =  N_{G}[v] \cup N_{G}[x] \cup V(\overline{K_\ell}) = V(G) \cup V(\overline{K_\ell}) = V(G + \overline{K_\ell}),$$ implying that $vx$ is also a good edge in $G+\overline{K_\ell}$. Hence, we assume that $v\in V(G)$ and $x\in V(\overline{K_\ell})$. Thus, $x=w_i$ for some $i\in [\ell]$. By the definition of the join operation, $N_{G+\overline{K_\ell}} [v] = N_G[v] \cup V(\overline{K_\ell})$. Since $N_G[v] \subseteq V(G)$, we have $$N_{G+\overline{K_\ell}} [v] \cup N_{G+\overline{K_\ell}} [x] = N_{G+\overline{K_\ell}} [v] \cup N_{G+\overline{K_\ell}} [w_i] = V(G)  \cup V(\overline{K_\ell})= V(G+\overline{K_\ell}),$$ implying that $vx = vw_i$ is a good edge in $G+\overline{K_\ell}$. Since $vx$ was arbitrarily chosen, the graph $G+\overline{K_\ell}$ is a good graph, and thus $G+\overline{K_\ell} \in \mathcal{H}$.
\end{proof}

Let $\mathcal{H}_1=\{K_1\}\cup \{K_{r,s}\mid r,s\in \mathbb{N}\}$. For any integer $k\geq 2$, we define
$$\mathcal{H}_k=\{K+H\mid K,H\in \mathcal{H}_{k-1}\}\cup \{G+\overline{K_\ell}\mid G\in \mathcal{H}_{k-1} \text{ and }\ell\in\mathbb{N} \}.$$

\begin{thm}
\label{thm:a-family-of-minimal-graphs}
$$\mathcal{H}=\bigcup_{k\in\mathbb{N}}\mathcal{H}_k.$$
\end{thm}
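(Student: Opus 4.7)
The plan is to establish both containments of $\mathcal{H}=\bigcup_{k\in\mathbb{N}}\mathcal{H}_k$ separately.

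For the inclusion $\bigcup_{k\in\mathbb{N}}\mathcal{H}_k\subseteq\mathcal{H}$ I would induct on $k$. The base case $k=1$ reduces to the fact that $K_1$ is vacuously good (it has no edges) and that every complete bipartite graph $K_{r,s}$ is good, which is immediate: any edge $uv$ of $K_{r,s}$ has its endpoints in distinct partite sets, so $N[u]\cup N[v]$ covers both parts and equals $V(K_{r,s})$. For the inductive step, any element of $\mathcal{H}_k$ has the form $K+H$ with $K,H\in\mathcal{H}_{k-1}\subseteq\mathcal{H}$ or the form $G+\overline{K_\ell}$ with $G\in\mathcal{H}_{k-1}\subseteq\mathcal{H}$, and Lemma~\ref{Lem:ClosedH} and Lemma~\ref{Lem:ExtendH} respectively deposit the combination back into $\mathcal{H}$.

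For the reverse inclusion I would induct on $n=|V(G)|$. The cases $n=1$ and $n=2$ are handled by $K_1,K_2=K_{1,1}\in\mathcal{H}_1$. For $n\geq 3$, the central structural observation is that $\overline{G}$ is disconnected: the good-graph condition says that whenever $uv\in E(G)$ no vertex $w\neq u,v$ is simultaneously non-adjacent to $u$ and $v$ in $G$; equivalently, no two vertices joined by a non-edge of $\overline{G}$ share a common neighbor in $\overline{G}$, and this forbids induced $P_3$'s in $\overline{G}$. Hence $\overline{G}$ is a disjoint union of cliques, and since $G$ has at least one edge this union contains at least two cliques. Now choose a component $C$ of $\overline{G}$, put $A=C$ and $B=V(G)\setminus C$, so that $G=\langle A\rangle_G+\langle B\rangle_G$. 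Restricting the covering of $V(G)$ afforded by a good edge to $A$ (respectively $B$) shows that each of $\langle A\rangle_G$ and $\langle B\rangle_G$ is itself good. Consequently each piece is either edgeless---a copy of $\overline{K_{|A|}}$ or $\overline{K_{|B|}}$---or a smaller good graph, which by induction lies in $\bigcup_k\mathcal{H}_k$. Reassembling, if both sides are edgeless then $G=K_{|A|,|B|}\in\mathcal{H}_1$; if exactly one side is edgeless the extension rule gives $G\in\bigcup_k\mathcal{H}_k$; and if neither side is edgeless the join rule does.

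The main technical hurdle is the last sub-case: the families $\mathcal{H}_k$ are not nested (for instance $K_1\in\mathcal{H}_1$ but $K_1\notin\mathcal{H}_2$), so the $K+H$ rule, which requires both arguments at the same level $\mathcal{H}_{k-1}$, is not directly applicable to two inductive pieces coming from distinct levels. I would resolve this with an auxiliary closure lemma stating that $\bigcup_k\mathcal{H}_k$ is closed under arbitrary joins; the lemma is proved by recursion on the smaller level, exploiting the rewritings $K_1=\overline{K_1}$ and $K_{r,s}=\overline{K_r}+\overline{K_s}$ to absorb each base-case piece into one or two successive extensions, after which the whole construction lands in a single $\mathcal{H}_k$.
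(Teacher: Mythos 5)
Your proof is correct, but the hard inclusion $\mathcal{H}\subseteq\bigcup_k\mathcal{H}_k$ is argued quite differently from the paper. The paper picks an induced subgraph $H=K+L$ of $G$ of maximum order, uses goodness of an edge leaving $H$ to show that a vertex outside $H$ could be absorbed into $K$ or into $L$ (contradicting maximality, hence $H=G$), and then checks that $K$ and $L$ are each edgeless or good. You instead observe that the good-edge condition is exactly the absence of an induced $P_3$ in $\overline{G}$, so $\overline{G}$ is a disjoint union of at least two cliques and $G$ is complete multipartite; splitting off one component $C$ of $\overline{G}$ gives $G=\langle C\rangle_G+\langle V(G)\setminus C\rangle_G$ with the first factor edgeless and the second edgeless or good, and induction on $n$ finishes. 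Your route is cleaner and yields the sharper structural statement that the good graphs on at least two vertices are precisely the complete multipartite graphs with at least two parts. Two remarks. First, the ``technical hurdle'' you flag (the levels $\mathcal{H}_k$ are not nested, e.g.\ $K_{2,3}\in\mathcal{H}_1\setminus\mathcal{H}_2$, so the rule $K+H$ with $K,H\in\mathcal{H}_{k-1}$ cannot directly combine pieces from different levels) never actually arises in your decomposition, since taking $A$ to be a single component of $\overline{G}$ forces $\langle A\rangle_G$ to be edgeless, so only the $G+\overline{K_\ell}$ rule and the base case $K_{r,s}=\overline{K_{|A|}}+\overline{K_{|B|}}$ are ever needed; your auxiliary join-closure lemma is therefore dispensable for your own argument, although it does patch the corresponding step that the paper's proof leaves implicit. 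Second, your reduction tacitly uses that $G$ is connected (equivalently has an edge when $n\ge 2$) to get at least two cliques in $\overline{G}$; this is consistent with the paper's convention that good graphs are connected, but is worth stating since an edgeless graph vacuously satisfies the every-edge-is-good condition.
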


\begin{proof}
It is easy to check that $\mathcal{H}_1\subseteq \mathcal{H}$. Also, by Lemma \ref{Lem:ClosedH} and Lemma \ref{Lem:ExtendH}, it follows that $\bigcup_{k\in\mathbb{N}}\mathcal{H}_k\subseteq \mathcal{H}$. Thus, it remains for us to prove the reverse inclusion. Suppose that $G$ is a good graph. Then $G$ is connected. If $G$ has only one vertex, then $G \in \mathcal{H}_1 \subseteq \mathcal{H}$. Hence, we may assume that $G$ has more than one vertex. 
Thus $G$ has at least one edge. We now consider the subgraph $H$ of $G$ with largest number of vertices, and such that $H=K+L$ for some induced subgraphs $K$ and $L$ of $G$. Suppose that $H$ is not $G$. Since $G$ is connected, there is a vertex $u$ in $G-H$ that is adjacent to a vertex $v$ in $H$. Without loss of generality, we can assume that $v\in V(K)$. Then, the edge $uv$ has to be a good edge; that is, $V(H) = V(K)\cup V(L)\subseteq N_G[u]\cup N_G[v]=V(G)$.

If there is a vertex $w\in V(K)$ that is not adjacent to $u$, then every vertex in $L$ has to be adjacent to $u$ (because every edge $ws$ for any $s\in V(L)$ has to be a good edge). In this case, we could add $u$ to $K$ and obtain a bigger subgraph of $G$, namely $H'=\langle V(K)\cup\{u\}\rangle_G+L$. However, this contradicts our choice of $H$. Hence, we may assume that all the vertices in $K$ are adjacent to $u$. In this case, we can again add $u$ to $L$ and have a bigger subgraph of $G$, namely $H''=K+\langle V(L)\cup\{u\}\rangle_G$. Once again, this is a contradiction to our choice of $H$. Hence, we must have $V(H)=V(G)$.

It is only left to prove that each of $K$ and $L$ is either an edgeless graph or a good graph.
It is sufficient to prove that if $K$ (or $L$) has an edge then it is a good graph. Suppose that $xy$ is an edge in $K$. Then it has to be a good edge in $G$; that is, $N_G[x]\cup N_G[y]=V(G)$. Thus, $N_K[x]\cup N_K[y]= (N_G[x]\cup N_G[y])\cap V(K)=V(G)\cap V(K)=V(K)$, implying that $xy$  also a good edge in $K$. 
\end{proof}

\subsection{Tight upper  bound for connected graphs}

In this subsection we present a tight upper bound on the $1$-nearly vertex independence number of a connected graph of order $n$. Furthermore, we characterise the two connected graphs of order $n$ that achieve the maximum $1$-nearly vertex independence number. 

\begin{thm}
	 \label{thm:if-G-is-a-connected-graph-of-order-n-then-alpha1G-atmost-n-1}
    If $G$ is a connected graph of order $n\ge 3$, then
    \begin{align*}
        \alpha_1(G) \le n-1,
    \end{align*}
    with equality if and only if $G \cong B^3_n$ or $G\cong U_{1,n-1}$.
\end{thm}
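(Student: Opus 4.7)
The plan is to prove the upper bound by a short contradiction argument and then characterise equality by a structural analysis. For the bound, if $\alpha_1(G)=n$ then the witnessing $1$-nearly vertex independent set must be $V(G)$ itself, so $G$ would contain exactly one edge; but a connected graph of order $n\ge 3$ has at least $n-1\ge 2$ edges, a contradiction, so $\alpha_1(G)\le n-1$. It is immediate that both candidate extremal graphs realise equality: in $B_n^3$, with the defining path $u_1u_2u_3$ and pendants $w_1,\dots,w_{n-3}$ attached to $u_1$, the set $\{u_2,u_3,w_1,\dots,w_{n-3}\}$ induces only the edge $u_2u_3$ and has cardinality $n-1$; in $U_{1,n-1}$, removing the centre of the underlying star leaves $n-1$ vertices whose induced subgraph is exactly the extra edge.

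For the uniqueness direction I would suppose $G$ is connected of order $n\ge 3$ with $\alpha_1(G)=n-1$, fix a maximum $1$-nearly vertex independent set $I$ of size $n-1$, let $v$ be the unique vertex of $V(G)\setminus I$, let $uw$ be the unique edge of $\langle I\rangle_G$, and set $X=I\setminus\{u,w\}$. The central observation is that no vertex in $X$ can have any neighbour inside $I$, since any such neighbour would create a second edge in $\langle I\rangle_G$; hence $N_G(x)\subseteq\{v\}$ for each $x\in X$. Because $G$ is connected and $n\ge 3$ no vertex is isolated, so in fact $N_G(x)=\{v\}$ for every $x\in X$. The same reasoning applied to $u$ and $w$ gives $N_G(u)\subseteq\{w,v\}$ and $N_G(w)\subseteq\{u,v\}$, and connectedness forces $v$ to be adjacent to at least one of $u,w$, since otherwise $\{u,w\}$ would be a component of its own.

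This leaves exactly two structural possibilities. If $v$ is adjacent to both $u$ and $w$, then $G$ consists of the edge $uw$ together with the $n-1$ edges from $v$, which is precisely $U_{1,n-1}$. If instead $v$ is adjacent to exactly one of $u,w$, say $u$, then the edges of $G$ are $uw$, $uv$, and $vx_1,\dots,vx_{n-3}$, exhibiting $G$ as the broom $B_n^3$ with defining path $w,u,v$ and pendants attached to $v$. I do not foresee a major technical obstacle, but the step that most needs care is pinning down $N_G(x)=\{v\}$ for each $x\in X$, because both directions matter: the upper containment uses the maximality of the induced edge count, while the lower containment uses connectedness. The only degenerate case is $n=3$, where $X=\emptyset$ and the two configurations collapse to $K_3\cong U_{1,2}$ and $P_3\cong B_3^3$, consistent with the claim.
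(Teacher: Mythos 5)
Your proposal is correct, and it takes a genuinely different and considerably more direct route than the paper. The paper proves the equality characterisation by induction on $n$ together with a long chain of claims: every cycle is a triangle, there is exactly one triangle, at most one triangle vertex has degree $\ge 3$, every degree-$2$ vertex lies on the triangle, and so on in the cyclic case, followed by a parallel analysis of maximum degree in the tree case; each claim is established by deleting the closed neighbourhoods of a well-chosen edge and bounding $\alpha_0$ of the remainder. Your argument replaces all of this with a single structural observation: a witness set $I$ of size $n-1$ leaves exactly one vertex $v$ outside, so every edge of $G$ other than the unique edge $uw$ of $\langle I\rangle_G$ must be incident to $v$; this pins down $N_G(x)=\{v\}$ for $x\in I\setminus\{u,w\}$ and $N_G(u)\subseteq\{w,v\}$, $N_G(w)\subseteq\{u,v\}$, after which connectedness leaves only the two configurations $U_{1,n-1}$ and $B_n^3$. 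Your treatment of the upper bound (a size-$n$ witness would force $G$ to have exactly one edge, impossible for a connected graph on $n\ge 3$ vertices) is likewise cleaner than anything explicit in the paper, which folds the bound into its case analysis. What your approach buys is brevity, no induction, and no need for the recursive formula $\alpha_1 = 2+\alpha_0(G-N[u]\cup N[v])$; what the paper's approach buys, arguably, is a template (delete closed neighbourhoods of an edge, bound the residual independence number) that generalises to $\alpha_k$ for larger $k$, where the complement of a witness set is no longer a single vertex. The one step you flagged as delicate, $N_G(x)=\{v\}$, is indeed the crux, and your justification of both inclusions is sound, including the degenerate case $n=3$.
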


\begin{proof}
    Note that $\alpha_1(B^3_n) = \alpha_1(U_{1,n-1}) =n-1 $. We proceed by induction on the order $n\ge 3$ of a connected graph $G$. If $n=3$, then $G \cong P_3$ in which case $\alpha_1(G) = 2 =n-1$ or $G \cong U_{1,2}$ in which case $\alpha_1(G) = 2=n-1$. Suppose the result is true for all connected graphs of order $3\le k < n$, and let $G$ be a connected graph of order $n=k$ such that $\alpha_1(G) = n-1$. First, we show that if $G$ contains a cycle, then $G \cong B^3_n$. Suppose that $G$ contains a cycle, $C_\ell$, of order $\ell$. We proceed with the following series of claims.

    \begin{claim}
    \label{claim:ell-is-3}
         $\ell =3$.
    \end{claim}

    \begin{proof}
        Suppose, to the contrary, that $\ell \ge 4$, and let $C_\ell = v_1, v_2, \ldots, v_\ell, v_1$ be a largest cycle of $G$. Now consider the graph $G' = G - N_G[v_1] \cup N_G[v_2]$. Since $\ell \ge 4$, we note that $G'$ is a graph of order 
        $n' = n - \deg_G v_1 - \deg_G v_2$. Thus, if $I_0(G')$ is a maximum $0$-nearly vertex independent set in $G'$, then $I_1(G) = \{v_1, v_2\} \cup I_0(G')$ is a maximum $1$-nearly vertex independent set in $G$. Hence,
        \begin{align*}
            \alpha_1(G) &= |\{v_1, v_2\}| + |I_0(G')|\\
            &=2 + \alpha_0(G')\\
            &\le 2 +  n -  \deg_G v_1 - \deg_G v_2\\
            &= (n-1) + 3 - \deg_G v_1 - \deg_G v_2.\\
            &< n-1 & \text{ since } \deg_G v_i \ge 2 \text{ for } i \in [2],
        \end{align*}
        a contradiction.
    \end{proof}

     \begin{claim}
    \label{claim:G-contains-exactly-one-cycle-which-is-a-triangle}
         $G$ contains exactly one cycle which is a triangle.
    \end{claim}

    \begin{proof}
        Suppose, to the contrary, that $G$ contains at least two cycles. By Claim \ref{claim:ell-is-3}, every  cycle of $G$ is a triangle $C_3$. Note that any two triangles of $G$ intersect at at most one vertex, otherwise there would be a bigger cycle in $G$. Let $C_{3,1} = v_{1,1}, v_{2,1}, v_{3,1}, v_{1,1}$ and $C_{3,2} = v_{1,2}, v_{2,2}, v_{3,2}, v_{1,2}$ be any two triangles of $G$. We now consider the following two cases.

        \textbf{Case I: } $V(C_{3,1}) \cap V(C_{3,2}) \ne \emptyset$.

        There exists a vertex $v \in V(G)$ such that $v \in V(C_{3,1})$ and $v \in V(C_{3,2})$. Without any loss of generality, we may assume that $v_{1,1} = v_{1,2}=v$. Now consider the graph $G'= G- N_G[v_{2,1}] \cup N_G[v_{3,1}]$. Note that $G'$ has order $n'=n -\deg_G v_{2,1} -\deg_G v_{3,1} +1$. Since $G'$ contains an edge $v_{2,2}v_{3,2}$, any maximum $0$-nearly vertex independent set of $G'$ has cardinality at most $n'-1$. Thus, if $I_0(G')$ is a maximum $0$-nearly vertex independent set in $G'$, then $I_1(G) = \{v_{2,1}, v_{3,1}\} \cup I_0(G')$ is a maximum $1$-nearly vertex independent set in $G$. Hence,
        \begin{align*}
            \alpha_1(G) &= |\{v_{2,1}, v_{3,1}\}| + |I_0(G')|\\
            &=2 + \alpha_0(G')\\
            &\le 2 + n' -1\\
            &= 2 +  n -\deg_G v_{2,1} -\deg_G v_{3,1} + 1 -1\\
            &= (n-1) + 3 -\deg_G v_{2,1} -\deg_G v_{3,1}\\
            &< n-1  & \text{ since } \deg_G v_{i,1} \ge 2 \text{ for } 2\le i \le 3,
        \end{align*}
        a contradiction.

        \textbf{Case II: } $V(C_{3,1}) \cap V(C_{3,2}) = \emptyset$.

        If this is the case, then there must exist a an edge $e = v_{2,1}v_{3,1}$ of $C_{3,1}$ such that $N_G[v_{2,1}] \cup N_G[v_{3,1}] \nsubseteq V(C_{3,1})$. Now consider the graph $G'= G- N_G[v_{2,1}] \cup N_G[v_{3,1}]$. Note that $G'$ has order $n'=n -\deg_G v_{2,1} -\deg_G v_{3,1} +1$. Since $G'$ contains a triangle $C_{3,2}$, any maximum $0$-nearly vertex independent set of $G'$ has cardinality at most $n'-2$. Thus, if $I_0(G')$ is a maximum $0$-nearly vertex independent set in $G'$, then $I_1(G) = \{v_{2,1}, v_{3,1}\} \cup I_0(G')$ is a maximum $1$-nearly vertex independent set in $G$. Hence,
        \begin{align*}
        	\alpha_1(G) &= |\{v_{2,1}, v_{3,1}\}| + |I_0(G')|\\
        	&=2 + \alpha_0(G')\\
        	&\le 2 + n' -2\\
        	&= n -\deg_G v_{2,1} -\deg_G v_{3,1} + 1\\
        	&= (n-1) + 2 -\deg_G v_{2,1} -\deg_G v_{3,1}\\
        	&< n-1  & \text{ since } \deg_G v_{i,1} \ge 2 \text{ for } 2\le i \le 3,
        \end{align*}
        a contradiction.
    \end{proof}
    
    \begin{claim}
    	\label{claim:at-most-one-vertex-of-the-triangle-of-G-has-degree-at-least-3}
    	At most one vertex of the triangle of $G$ has degree at least $3$.
    \end{claim}
    
    \begin{proof}
    	Let $C_3=v_1, v_2, v_3,v_1$ be the triangle of $G$. Suppose, to the contrary, that there exists at least two vertices of $C_3$ that have degrees at least $3$. Let $v_1$ and $v_2$ be two vertices of $C_3$ such that $\deg_G v_i \ge 3$ for $i \in [2]$. Now consider the graph $G' = G - N_G[v_1] \cup N_G[v_2]$. Note that $G'$ is a graph of order 
    	$n' = n - \deg_G v_1 -\deg_Gv_2 + 1$. Thus, if $I_0(G')$ is a maximum $0$-nearly vertex independent set in $G'$, then $I_1(G) = \{v_1, v_2\} \cup I_0(G')$ is a maximum $1$-nearly vertex independent set in $G$. Hence,
    	\begin{align*}
    		\alpha_1(G) &= |\{v_1, v_2\}| + |I_0(G')\\
    		&=2 + \alpha_0(G')\\
    		&\le 2 +  n - \deg_G v_1 -\deg_Gv_2 + 1\\
    		&= (n-1) + 4 - \deg_G v_1 -\deg_Gv_2\\
    		&< n-1 &\text{ since } \deg_G v_i \ge 3 \text{ for some } i \in [2],
    	\end{align*}
    	 a contradiction.
    \end{proof}
    
    \begin{claim}
    	\label{claim:every-vertex-of-degree-2-in-G-lie-on-a-cycle}
    Every vertex of degree $2$ in $G$ lies on the triangle of $G$ 
    \end{claim}
    
    \begin{proof}
    	Let $C_3 = v_1, v_2, v_3, v_1$ be the triangle of $G$. Suppose, to the contrary, that there exists a vertex $w \in V(G)$ such that $\deg_G w =2$ and $w \ne v_i$ for all $i\in [3]$. By Claim \ref{claim:at-most-one-vertex-of-the-triangle-of-G-has-degree-at-least-3},  since $G$ is connected and $w \ne v_i$ for all $i\in [3]$, there must exist exactly one vertex $v_i$ of $C_3$, for $i\in [3]$, such that $\deg_G v_i \ge 3$. Without any loss of generality, let $\deg_G v_3 \ge 3$. Then, by Claim \ref{claim:at-most-one-vertex-of-the-triangle-of-G-has-degree-at-least-3}, $\deg_G v_i = 2$ for $i\in [2]$.
    	
    	Let $N_G(w) = \{w_1, w_2\}$. Note that at most one vertex $w_i \in N_G(w)$ for $i\in [2]$ lie on $C_3$, otherwise there would be a bigger cycle in $G$. Let $w_2$ be a neighbour of $w$ that does not lie on $C_3$. Now consider the graph $G' = G - N_G[v_1] \cup N_G[v_2]$. Note that $G'$ is a graph of order 
    	$n' = n - \deg_G v_1 - \deg_Gv_2 + 1$. Since $G'$ contains an edge $ww_2$, any maximum $0$-nearly vertex independent set of $G'$ has cardinality at most $n'-1$. Thus, if $I_0(G')$ is a maximum $0$-nearly vertex independent set in $G'$, then $I_1(G) = \{v_1, v_2\} \cup I_0(G')$ is a maximum $1$-nearly vertex independent set in $G$. Hence,
    	\begin{align*}
    		\alpha_1(G) &= |\{v_1, v_2\}| + |I_0(G')|\\
    		&=2 + \alpha_0(G')\\
    		&\le 2 + n' -1\\
    		&= 2 + n - \deg_G v_1 - \deg_Gv_2 + 1 -1 \\
    		&= (n-1) + 3 - \deg_G v_1 - \deg_Gv_2\\
    		&< n-1, &\text{ since } \deg_G v_i = 2 \text{ for } i \in [2],
    	\end{align*}
     a contradiction.
    \end{proof}
    
    \begin{claim}
    	\label{claim:every-neigbour-of-a-vertex-of-degree-at-least-3-in-C-3-that-lies-outside-C-3-has-degree-exactly-1}
    	If $C_3 = v_1, v_2, v_3, v_1$ is the triangle of $G$ with $\deg_G v_3 \ge 3$, then every neighbour of $v_3$ that does not lie on $C_3$ has degree exactly $1$.
    \end{claim}
    
    \begin{proof}
    	Let $C_3 = v_1, v_2, v_3, v_1$ be the triangle of $G$ with $\deg_G v_3 \ge 3$. Let $N_G(v_3) \setminus \{v_1, v_2\} = \{w_1, w_2, \ldots, w_r\}$, where $r= \deg_G v_3 -2$, be the set of all neighbours of $v_3$ that do not lie on $C_3$. Suppose, to the contrary, that $\deg_G w_i \ge 2$ for some $i \in [r]$. Without any loss of generality, let $\deg_G w_1 \ge 2$. Note that $v_3$ is the only neighbour of $w_1$ that lies on $C_3$, otherwise there would be bigger cycle in $G$. Thus, $N_G(w_1) \setminus \{v_3\} = \{w_{1,1}, w_{2,1}, \ldots, w_{s,1}\}$, where $s=\deg_G w_1 -1$, is the set of all neighbours of $w_1$ that do not lie on $C_3$.
    	
    	 Now consider the graph $G' = G - N_G[v_1] \cup N_G[v_2]$. Note that $G'$ is a graph of order 
    	$n' = n - \deg_G v_1 - \deg_Gv_2 + 1$. Since $G'$ contains an edge $w_1w_{1,1}$, any maximum $0$-nearly vertex independent set of $G'$ has cardinality at most $n'-1$. Thus, if $I_0(G')$ is a maximum $0$-nearly vertex independent set in $G'$, then $I_1(G) = \{v_1, v_2\} \cup I_0(G')$ is a maximum $1$-nearly vertex independent set in $G$. Hence,
    	\begin{align*}
    		\alpha_1(G) &= |\{v_1, v_2\}| + |I_0(G')|\\
    		&=2 + \alpha_0(G')\\
    		&\le 2 + n' -1\\
    		&= 2 +  n - \deg_G v_1 - \deg_Gv_2 + 1 -1 \\
    		&=(n-1) + 3 - \deg_G v_1 - \deg_Gv_2\\
    		&< n-1, &\text{ since } \deg_G v_i = 2 \text{ for } i \in [2],
    	\end{align*}
    	a contradiction.
    \end{proof}
    
    Recall that the graph $G$ contains a cycle. By Claim \ref{claim:ell-is-3}, every cycle of $G$ is a triangle. By Claim \ref{claim:G-contains-exactly-one-cycle-which-is-a-triangle}, the graph $G$ contains exactly one cycle which is a triangle. By Claim \ref{claim:at-most-one-vertex-of-the-triangle-of-G-has-degree-at-least-3}, at most one vertex of the triangle of $G$ has degree at least $3$. By Claim \ref{claim:every-vertex-of-degree-2-in-G-lie-on-a-cycle}, every vertex of degree $2$ in $G$ lies on the triangle of $G$. Furthermore, by Claim \ref{claim:every-neigbour-of-a-vertex-of-degree-at-least-3-in-C-3-that-lies-outside-C-3-has-degree-exactly-1}, every neighbour of the vertex of degree at least $3$ that lies on the triangle of $G$ has degree exactly $1$. These properties of the graph $G$ such that $G$ contains a cycle and $\alpha_1(G)=n-1$ are sufficient to deduce that $G \cong U_{1, n-1}$.
    
    Hence, we may assume that $G$ does not contain a cycle. If $G$ does not contain a cycle, then, since $G$ is connected, $G$ is a tree. We now complete the proof of Theorem \ref{thm:if-G-is-a-connected-graph-of-order-n-then-alpha1G-atmost-n-1} with the following claim.
    
    \begin{claim}
    	\label{claim:G-has-max-degree-exactly-n-2}
    	$G$ has maximum degree $r= n-2$.
    \end{claim}
    
    \begin{proof}
    	Let $v$ be a vertex of maximum degree in $G$. Suppose, to the contrary, that $r=\deg_G v \ne n-2$. If $r= n-1$, then $G \cong K_{1,n-1}$ and thus $\alpha_1(G) =2$. Hence, we may assume that $r \le n-2$. Let $N_G(v)$ be the set of all neighbours of $v$ in $G$. Since $r \le n-2$, there must exist a vertex $v_i \in N_G(v)$ such that $\deg_G v_i \ge 2$.  Let $A = \{v_1, v_2, \ldots, v_p\} \subseteq  N_G(v)$ be the set of all neighbours of $v$ that have degrees at least $2$ in $G$. We now consider the following cases.
    	
    	\textbf{Case I: } $r\le n-3$ and  $p \ge 2$.
    	
    	Suppose that $r\le n-3$ and  $p \ge 2$. Without any loss of generality, let $v_{1,1}$ and $v_{1,2}$ be neighbours of $v_1$ and $v_2$ in $G$, respectively. Now consider the graph $G'= G- N_G[v_1] \cup N_G[v_{1,1}]$. Note that $G'$ has order $n'=n- \deg_G v_1 - \deg_G v_{1,1}$. Since $G'$ contains an edge $v_2v_{1,2}$, any maximum $0$-nearly vertex independent set of $G'$ has cardinality at most $n'-1$. Thus, if $I_0(G')$ is a maximum $0$-nearly vertex independent set in $G'$, then $I_1(G) = \{v_1, v_{1,1}\} \cup I_0(G')$ is a maximum $1$-nearly vertex independent set in $G$. Hence,
    	\begin{align*}
    		\alpha_1(G) &= |\{v_1, v_{1,1}\}| + |I_0(G')|\\
    		&=2 + \alpha_0(G')\\
    		&\le 2 + n' -1\\
    		&= 2 +n - \deg_G v_1 - \deg_G v_{1,1} - 1\\
    		&= (n-1) + 2 - \deg_G v_1 - \deg_G v_{1,1}\\
    		&<n-1 &\text{ since } \deg_G v_1 \ge 2
			\text{ and } \deg_G v_{1,1} \ge 2,
    	\end{align*}
    	a contradiction. 
    	
    	Hence, we may assume that $r\le n-3$ and $p=1$. Thus, $A = \{v_1\}$. 
    	
    	\textbf{Case II: } $r\le n-3$, $p = 1$ and $\deg_G v_1 \ge 3$.  
    	
    	Suppose that $r\le n-3$, $p \ge 1$ and $\deg_G v_1 \ge 3$. Since $\deg_G v_1 \ge 3$, there must exist two vertices $v_{1,1}$ and $v_{2,1}$ such that $v_{1,1} \ne v_{2,1} \ne v$ and $\{ v_{1,1} , v_{2,1} \} \subseteq N_G(v_1)$. Now consider the graph $G'= G- N_G[v_1] \cup N_G[v_{1,1}]$. Note that $G'$ has order $n'=n -\deg_G v_1 -\deg_G v_{1,1}$.  Thus, if $I_0(G')$ is a maximum $0$-nearly vertex independent set in $G'$, then $I_1(G) = \{v_1, v_{1,1}\} \cup I_0(G')$ is a maximum $1$-nearly vertex independent set in $G$. Hence,
    	\begin{align*}
    		\alpha_1(G) &= |\{v_1, v_{1,1}\}| + |I_0(G')|\\
    		&=2 + \alpha_0(G')\\
    		&\le 2 +  n -\deg_G v_1 -\deg_G v_{1,1}\\
    		&= (n-1) + 3 -\deg_G v_1 -\deg_G v_{1,1}\\
    		&< n-1  & \text{ since } \deg_G v_1 \ge 3,
    	\end{align*}
    	a contradiction.
    	
    	\textbf{Case III: } $r\le n-3$, $p = 1$ and $\deg_G v_1 = 2$.  
    	
    	Suppose that $r\le n-3$, $p = 1$ and $\deg_G v_1 = 2$.  In this case, the vertex $v_{1,1} \in N_G(v_1)$ must have degree at least $2$ in $G$. Let $v_{1,1}^*$ be a neighbour of $v_{1,1}$ in $G$ such that $v_{1,1}^* \ne v_1$. Now consider the graph $G'= G- N_G[v_{1,1}] \cup N_G[v_{1,1}^*]$. Note that $G'$ has order $n'=n -\deg_G v_{1,1} -\deg_G v_{1,1}^*$.   Since $G'$ contains a component that is a star, any maximum $0$-nearly vertex independent set of $G'$ has cardinality at most $n'-1$. Thus, if $I_0(G')$ is a maximum $0$-nearly vertex independent set in $G'$, then $I_1(G) = \{v_{1,1}, v_{1,1}^*\} \cup I_0(G')$ is a maximum $1$-nearly vertex independent set in $G$. Hence,
    	\begin{align*}
    		\alpha_1(G) &= |\{v_{1,1}, v_{1,1}^*\}| + |I_0(G')|\\
    		&=2 + \alpha_0(G')\\
    		&\le 2 + n' -1\\
    		&= 2 +  n -\deg_G v_{1,1} -\deg_G v_{1,1}^* -1\\
    		&= (n-1) + 2 -\deg_G v_{1,1} -\deg_G v_{1,1}^*\\
    		&< n-1  & \text{ since } \deg_G v_{1,1} \ge 2,
    	\end{align*}
    	a contradiction.
    	
    	Thus, $r=\deg_G v = n-2$, thereby completing the proof of Claim \ref{claim:G-has-max-degree-exactly-n-2}.
    \end{proof}

    By Claim \ref{claim:G-has-max-degree-exactly-n-2} Case I, there is exactly one neighbour $v_1$ of $v$ that has degree at least $2$ in $G$. By Claim \ref{claim:G-has-max-degree-exactly-n-2} Case II, the degree of $v_1$ in $G$ is exactly $2$. By Claim \ref{claim:G-has-max-degree-exactly-n-2} Case III, the vertex $v_{1,1} \in N_G(v_1) \setminus \{v\}$ has degree exactly $1$ in $G$. Thus, the graph $G$ is a tree of order $n$ with degree sequence $(n-2, 2, 1,\ldots, 1)$, and so $G \cong U_{1,n-1}$. This completes the proof of Theorem \ref{thm:if-G-is-a-connected-graph-of-order-n-then-alpha1G-atmost-n-1}.
\end{proof}

\newpage
\bibliographystyle{abbrv} 
\bibliography{references}

\end{document}